\theoremstyle{plain}
\newtheorem{thm}{Theorem}
\newtheorem{lem}[thm]{Lemma}
\newtheorem{prop}[thm]{Proposition}
  \newtheorem{thma}[thm]{Theorem}
    \newtheorem{corb}[thm]{Corollary}
\theoremstyle{definition}
\newtheorem{quest}[thm]{Question}
\theoremstyle{remark}
\newtheorem{rem}[thm]{Remark}
\newcommand{\N}{{\mathbb N}}
\newcommand{\Z}{{\mathbb Z}}
\newcommand{\Q}{{\mathbb Q}}
\newcommand{\R}{{\mathbb R}}
\newcommand{\C}{{\mathbb C}}
\newcommand{\sbar}{\,\;\vert\,\;}
\newcommand{\longsbar}{\,\;\mathstrut\vrule\,\;}
\newcommand{\BM}{\bar{M}}
\DeclareMathOperator{\rank}{rank}
\DeclareMathOperator{\Ker}{Ker}
\DeclareMathOperator{\im}{Im}
\DeclareMathOperator{\tr}{tr}
\begin{document}
\title[]{On the middle dimensional homology classes of equilateral polygon spaces}
\author[]{Yasuhiko KAMIYAMA}
 \email{kamiyama@sci.u-ryukyu.ac.jp}
\address[]{Department of Mathematics, University of the Ryukyus, 
Nishihara-Cho, Okinawa 903-0213, Japan} 
\subjclass[2000]{Primary 58D29; Secondary 55R80} 
\keywords{Polygon space; involution; homology class}

\begin{abstract}
Let $M_n$ be the configuration space of equilateral polygonal linkages with $n$
vertices in the Euclidean plane ${\mathbb R}^2$. 
We consider the case that $n$ is odd and set $n=2m+1$. 

In spite of the long history of research, the homology classes in $H_{m-1}(M_n;{\mathbb Z})$
are mysterious and not well-understood. 
Let $\tau\colon M_n \to M_n$ be the involution induced by complex conjugation. 
In this paper, we determine the representation matrix of
the homomorphism $\tau_\ast\colon H_{m-1}(M_n;{\mathbb Z}) \to H_{m-1}(M_n;{\mathbb Z})$
with respect to a basis of $H_{m-1}(M_n;{\mathbb Z})$.
\end{abstract}

\maketitle

\section{Introduction} 
\label{intro}
We consider the configuration space $M_n$ of equilateral polygonal linkages with $n \, (n \geq 3)$
vertices, each edge having length $1$ in the Euclidean plane $\R^2$ modulo orientation preserving
isometry group. 
We remark that $M_n$ has the following description:
\[
 M_n=\left\{(z_1,\dots,z_{n-1}) \in (S^1)^{n-1} \longsbar\sum_{i=1}^{n-1}z_i=1\right\}.
\]
Here $z_i \in S^1 \subset \C$ denotes the unit vectors in the direction of the sides of a polygon.

More generally, starting in \cite{H,KM,W}, 
the topology of the configuration space of polygons of arbitrary edge lengths has been considered by many authors. 
For example, the homology groups were determined in \cite{FS}.
The study culminated in the proof by \cite{FHS} and \cite{S} of a conjecture by Kevin Walker
which states that one can recover relative lengths of edges from 
the integral cohomology ring of the configuration space.

While they made clever arguments to distinguish the cohomology rings, 
the explicit structure of the rings is not determined completely. 
For example, consider $M_n$ for odd $n=2m+1$. 
In this case, $M_n$ is a connected closed manifold of dimension $n-3$. 
The homology groups $H_\ast(M_n;\Z)$ are torsion free and the Poincar\'e polynomial is given by
\begin{equation}
 \label{eq1}
  PS(M_{n}) =
  \sum_{q=0}^{m-2} \binom{n-1}{q}t^q+2 \binom{n-1}{m-1}t^{m-1}+\sum_{q=m}^{n-3} \binom{n-1}{q+2}t^q.
\end{equation}
Moreover, for the natural inclusion 
\begin{equation}
\label{eq2}
i\colon M_n \to (S^1)^{n-1}, 
\end{equation}
the induced homomorphism 
\[
 i_{\ast}\colon H_q(M_n;\Z) \to H_q((S^1)^{n-1};\Z)
\]
is an isomorphism for $q \leq m-2$ and an epimorphism for $q=m-1$. (See \cite{FS,KT}.) The following number plays a central role in this paper:
\begin{equation}
\label{eq3}
D_n=\binom{n-1}{m-1}.
\end{equation}
We have $\rank H_{m-1}((S^1)^{n-1};\Z)=D_n$ but $\rank H_{m-1}(M_n;\Z)=2D_n$.
This implies that $H_{m-1}(M_n;\Z)$ contains homology classes which do not come from $(S^1)^{n-1}$. (See also \eqref{eq13} in \S2.) 
The classes cause difficulty in determining the ring $H^\ast (M_n;\Z)$. 

To consider the homology classes, recall that $M_n$ comes with a natural involution 
\begin{equation}
\label{eq4}
\tau\colon M_n \to M_n, \quad \tau(z_1,\dots,z_{n-1})=(\bar{z}_1,\dots,\bar{z}_{n-1}).
\end{equation}
induced by complex conjugation.
It is sometimes guessed that there might be a basis 
\[
\langle u_1,\dots,u_{D_n},v_1,\dots,v_{D_n} \rangle
\]
of $H_{m-1}(M_n;\Z)$ such that $\tau_\ast (u_i)=v_i$ for all $1 \leq i \leq D_n$.
In other words, we fix a basis of $H_{m-1}(M_n;\Z)$ and denote by $A$ the representation matrix of the homomorphism
\begin{equation}
\label{eq5}
\tau_\ast \colon H_{m-1}(M_n;\Z) \to H_{m-1}(M_n;\Z)
\end{equation}
with respect to the basis. Then it is guessed that there might exist $S \in GL(n,\Z)$ such that
\begin{equation}
\label{eq6}
S^{-1}AS= \underset{D_n}{\oplus} \begin{pmatrix}0&1\\1 & 0 \end{pmatrix}.
\end{equation}

Now we pose the following:
\begin{quest}
\label{q1}
Is \eqref{eq6} true for odd $n$?
\end{quest}

The purpose of this paper is to give a negative answer to Question \ref{q1}.
This paper is organized as follows. 
In \S\ref{results} we state our main results (Theorem \ref{thA} and Corollary \ref{coB}).
Theorem \ref{thA} is essentially equivalent to Theorem \ref{th3}.
Theorem \ref{th4} is a key result for Theorem \ref{th3} and 
Proposition \ref{pro6} is a key result for Theorem \ref{th4}. 
In \S\ref{prof6} we prove Proposition \ref{pro6} and 
in \S\ref{prof4} we prove Theorem \ref{th4}. 
In \S\ref{prof3} we prove Theorem \ref{th3}
and in \S\ref{profA} we prove our main results.

\section{Statements of the main results}
\label{results}
Hereafter we assume that $n$ is an odd number which satisfies $n \geq 5$. We set $n=2m+1$. 
\begin{thma} 
\label{thA}
We fix a basis of $H_{m-1}(M_n;\Z)$ and denote by $A$ 
the representation matrix of the homomorphism
\eqref{eq5} with respect to the basis.
We set
\[
\alpha_n = \sum_{i=0}^{m-1}2^{m-1-i}\binom{2i}{i} 
\quad \text{and} \quad
\beta_n= D_n-\alpha_n,
\]
where $D_n$ is defined in \eqref{eq3}. Then there exists $S \in GL(n,\Z)$ such that
\begin{equation}
\label{eq7}
S^{-1}AS= \underset{\alpha_n}{\oplus} 
\begin{pmatrix}
0&1\\1 & 0 
\end{pmatrix}
\oplus \underset{\beta_n}{\oplus} 
\begin{pmatrix} 
1 & 0\\0 & -1 
\end{pmatrix}.
\end{equation}
\end{thma}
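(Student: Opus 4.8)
The plan is to recast \eqref{eq7} as a statement about the integral representation of $\Z/2$ carried by $H_{m-1}(M_n;\Z)$, and then to pin that representation down by a small number of numerical invariants. Since $\tau^2=\mathrm{id}$ we have $A^2=I$, and since $H_{m-1}(M_n;\Z)$ is torsion free it is a $\Z[\Z/2]$-lattice. The classical classification of integral representations of $\Z/2$ then applies: every such lattice is a direct sum of the three indecomposables $(1)$, $(-1)$ and the regular module $\Z[\Z/2]=\left(\begin{smallmatrix}0&1\\1&0\end{smallmatrix}\right)$. Hence there are nonnegative integers $a,b,c$ with $A$ conjugate over $\Z$ to $a\left(\begin{smallmatrix}0&1\\1&0\end{smallmatrix}\right)\oplus b(1)\oplus c(-1)$. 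Because $\left(\begin{smallmatrix}1&0\\0&-1\end{smallmatrix}\right)=(1)\oplus(-1)$, proving \eqref{eq7} is exactly the assertion that $(a,b,c)=(\alpha_n,\beta_n,\beta_n)$.

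Next I would isolate invariants that recover $(a,b,c)$. The total rank gives $2a+b+c=2D_n$. The trace ignores the regular blocks, so $\tr A=b-c$. Finally, reducing mod $2$ turns $(1)$, $(-1)$ and $\left(\begin{smallmatrix}0&1\\1&0\end{smallmatrix}\right)$ into $I$, $I$ and $\left(\begin{smallmatrix}0&1\\1&0\end{smallmatrix}\right)$, so $A-I$ has $\mathbb{F}_2$-rank equal to the number of regular blocks; that is, $\rank_{\mathbb{F}_2}\bigl((A-I)\bmod 2\bigr)=a$. Thus it suffices to compute $\tr A$ and $a$: then $b-c=\tr A$ and $b+c=2D_n-2a$ determine $b$ and $c$.

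I would then show $\tr A=0$, i.e.\ $b=c$. The mechanism is Poincar\'e duality together with the fact that $\tau$ reverses the orientation of $M_n$. For the latter, view $M_n$ as the submanifold of $T=(S^1)^{n-1}$ cut out by $\sum_{i=1}^{n-1}z_i=1$; its normal bundle is trivialized by $d\!\left(\sum_i z_i\right)$, on which $\tau$ acts by complex conjugation and hence reverses orientation, while $\tau$ preserves the orientation of $T$ because $n-1=2m$ is even. Therefore $\tau$ reverses the orientation of $M_n$. Since $H_{m-1}$ is the self-dual middle group, Poincar\'e duality relates $\tau_\ast$ on $H_{m-1}$ to $-1$ times $\tau^\ast$ on the dual group $H^{m-1}$; as these have equal trace, $\tr A=-\tr A$, so $\tr A=0$. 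As a check, for $n=5$ the space $M_5$ is the closed orientable surface of genus $4$, $\tau$ is fixed-point free (a configuration with all $z_i=\pm1$ and $\sum z_i=1$ is excluded by parity), and $M_5/\tau$ has Euler characteristic $-3$, so the quotient is nonorientable, confirming orientation reversal. With $\tr A=0$ we get $b=c$ and $a+b=D_n$.

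The main obstacle is the remaining, genuinely integral computation $a=\alpha_n$, equivalently the determination of $\rank_{\mathbb{F}_2}(A-I)$. Rationally the $\pm1$-eigenspaces of $A$ are balanced of rank $D_n$ each, so whether a $+1$- and a $-1$-eigenvector assemble into a single regular $\Z[\Z/2]$-block or split off as $(1)\oplus(-1)$ is invisible over $\Q$: it can only be read from an explicit, $\tau$-compatible $\Z$-basis of $H_{m-1}(M_n;\Z)$ together with the exact action of $\tau_\ast$ on it. I would obtain such a description from the geometry of $M_n$, organizing the classes---in particular the ones not coming from the torus $(S^1)^{n-1}$ (cf.\ \eqref{eq2} and the discussion of \eqref{eq3})---by a discrete parameter $i$, and then count $\rank_{\mathbb{F}_2}(\tau_\ast-\mathrm{id})$. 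The expected bookkeeping yields the binomial sum $a=\alpha_n=\sum_{i=0}^{m-1}2^{\,m-1-i}\binom{2i}{i}$; once this is established, $b=c=D_n-\alpha_n=\beta_n$, which is precisely \eqref{eq7}. The hard step is therefore the explicit mod-$2$ enumeration, for which a cellular or Morse-theoretic model of $M_n$ adapted to the involution $\tau$ would be the natural tool.
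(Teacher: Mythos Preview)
Your reduction is correct and coincides with the paper's Lemma~\ref{lem8}: any integral $A$ with $A^2=I$ is $\Z$-conjugate to a unique $F(a,b,c)=a\left(\begin{smallmatrix}0&1\\1&0\end{smallmatrix}\right)\oplus b(1)\oplus c(-1)$, and the invariants you name (rank, trace, and $\rank_{\mathbb{F}_2}(A-I)$) do recover $(a,b,c)$. Your Poincar\'e-duality argument for $\tr A=0$ is also valid---$\tau$ reverses the orientation of $M_n$ for the reason you give---and yields $b=c$ by a route the paper does not take (there $b=c$ simply falls out of Theorem~\ref{th3}).

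The gap is that you do not compute $a=\alpha_n$. You explicitly acknowledge this (``the hard step is the explicit mod-$2$ enumeration'') and propose to extract it from an unspecified $\tau$-equivariant cellular or Morse model of $M_n$, but no such model from which $\rank_{\mathbb{F}_2}(\tau_\ast-1)$ can be read off by inspection is available, and producing one is precisely the difficulty. The paper's argument is far from bookkeeping: it passes to $E_n=M_n\times_\tau S^1$, uses the Wang sequence of $M_n\to E_n\to S^1$ to convert the elementary divisors of $1-\tau_\ast$ into the $2$-torsion of $H_{m-1}(E_n;\Z)$ (Theorem~\ref{th3}), computes that torsion via the Gysin sequence of $S^1\to E_n\to\BM_n$ (Theorem~\ref{th4}), and finally reduces everything to the rank of $\cup R_n^2\colon H^{m-2}(\BM_n;\Z_2)\to H^m(\BM_n;\Z_2)$ in the Hausmann--Knutson presentation of $H^\ast(\BM_n;\Z_2)$. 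That rank (Proposition~\ref{pro6}) is obtained through a nontrivial combinatorial lemma (Lemma~\ref{lem7}) invoking the modular homology of the Boolean lattice \cite{BJS} and Wilson's incidence-matrix rank theorem \cite{WI}; only at that stage does the number $\alpha_n=\sum_{i=0}^{m-1}2^{m-1-i}\binom{2i}{i}$ emerge. So your outline is a sound scaffold, but the step you defer carries essentially all of the content of the theorem.
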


\begin{corb} 
 \label{coB}
Question \ref{q1} is false for odd $n$ which is more than or equal to $7$.
\end{corb}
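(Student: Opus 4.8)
The plan is to combine Theorem \ref{thA} with an integral conjugacy invariant. By Theorem \ref{thA} the representation matrix $A$ is $GL(n,\Z)$-conjugate to the normal form on the right-hand side of \eqref{eq7}; call this matrix $B$. An affirmative answer to Question \ref{q1} would mean that $A$, hence $B$, is also $GL(n,\Z)$-conjugate to the matrix $C:=\underset{D_n}{\oplus}\begin{pmatrix}0&1\\1&0\end{pmatrix}$ of \eqref{eq6}, so it suffices to show that $B$ and $C$ are \emph{not} $GL(n,\Z)$-conjugate when $n\geq 7$. The point to keep in mind is that both blocks $\begin{pmatrix}0&1\\1&0\end{pmatrix}$ and $\begin{pmatrix}1&0\\0&-1\end{pmatrix}$ are involutions, each conjugate to $\mathrm{diag}(1,-1)$ over $\Q$; thus every rational invariant (eigenvalues, $\tr=0$, $\det=-1$) agrees for $B$ and $C$, and the obstruction must be genuinely integral.

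First I would introduce reduction modulo $2$. For $X\in GL(N,\Z)$ let $\bar X$ denote its reduction in $GL(N,\mathbb{F}_2)$ and set $r(X):=\rank_{\mathbb{F}_2}(\bar X-I)$. Since $\det S=\pm1$ for $S\in GL(N,\Z)$, the reduction $\bar S$ lies in $GL(N,\mathbb{F}_2)$ and $\overline{S^{-1}XS}-I=\bar S^{-1}(\bar X-I)\bar S$; hence $r$ is a $GL(N,\Z)$-conjugacy invariant. A direct block computation gives the two values I need: modulo $2$ one has $\begin{pmatrix}0&1\\1&0\end{pmatrix}-I\equiv\begin{pmatrix}1&1\\1&1\end{pmatrix}$ of rank $1$, whereas $\begin{pmatrix}1&0\\0&-1\end{pmatrix}\equiv I$ contributes rank $0$. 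Therefore $r(B)=\alpha_n$ while $r(C)=D_n$, so $B$ and $C$ can be $GL(n,\Z)$-conjugate only if $\alpha_n=D_n$, i.e. only if $\beta_n=0$.

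It then remains to prove the arithmetic inequality $\beta_n>0$ for $n\geq 7$ (equivalently $m\geq 3$). Writing $\alpha(m)$ and $D(m)=\binom{2m}{m-1}$ as functions of $m$, the definition of $\alpha_n$ gives the recursion $\alpha(m)=2\alpha(m-1)+\binom{2m-2}{m-1}$, and applying Pascal's rule twice yields the simplification $D(m)-2D(m-1)-\binom{2m-2}{m-1}=\binom{2m-2}{m-3}$. Subtracting, I obtain $\beta(m)=2\beta(m-1)+\binom{2m-2}{m-3}$ with $\beta(2)=0$. Since the increment $\binom{2m-2}{m-3}$ is positive for $m\geq 3$ and $\beta(m-1)\geq 0$, induction gives $\beta(m)\geq\binom{2m-2}{m-3}>0$ for all $m\geq 3$. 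This contradicts the requirement $\beta_n=0$, so \eqref{eq6} fails and Question \ref{q1} is answered negatively for $n\geq 7$.

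The main obstacle is conceptual rather than computational: one must choose an invariant that survives the passage from $\Q$ to $\Z$, and recognizing that reduction modulo $2$ does the job—precisely because the anti-diagonal block becomes $\begin{pmatrix}1&1\\1&1\end{pmatrix}$ while the diagonal block collapses to the identity—is the crux. Once this is in place the remaining inequality is routine, the only delicate point being the boundary case $m=2$ ($n=5$), where $\beta_n=0$ and \eqref{eq6} in fact holds; this matches the hypothesis $n\geq 7$ in the corollary.
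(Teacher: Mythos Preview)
Your proof is correct. The paper's argument is essentially the same in spirit: it invokes Lemma~\ref{lem8}\,(ii), which distinguishes the normal forms $F(x,y,z)$ by the elementary divisors of $I-F(x,y,z)$ (these are $0,1,2$ with multiplicities $x+y,\,x,\,z$), and then notes that $\beta_n\neq 0$ for $n\geq 7$. Your mod-$2$ rank invariant $r(X)=\rank_{\mathbb{F}_2}(\bar X-I)$ is precisely the count of odd elementary divisors of $X-I$, i.e.\ the number of $1$'s, so you are computing the same quantity without appealing to the general structure lemma for integral involutions. What your write-up adds is a clean self-contained verification of $\beta_n>0$ via the recursion $\beta(m)=2\beta(m-1)+\binom{2m-2}{m-3}$, which the paper asserts but does not prove; this is a welcome supplement.
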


\begin{rem} \begin{enumerate}
\item It is reasonable that Question \ref{q1} is true for $n=5$. 
      In fact, $M_5$ is homeomorphic to a connected closed orientable surface of genus $4$. 
      We number the four handles appropriately and denote them by $h_i$ for $1 \leq i \leq 4$. 
      Then $\tau$ acts on $M_5$ by exchanging
      $h_1$ with $h_3$ and $h_2$ with $h_4$. (See, for example, \cite{KM}.)
\item It is known (see, for example, \cite[A082590]{OEIS}) that $\alpha_n$ 
      satisfies the following equation: 
      \[
       \frac{1}{(1-2x)\sqrt{1-4x}}= \sum_{i=0}^\infty \alpha_{2i+3}x^i.
      \]
\end{enumerate}
\end{rem}

Recall that an integral square matrix $P$ with $P^2=I$ has a normal form. 
(See Lemma \ref{lem8} in \S\ref{profA}.)
Thanks to this, we can restate Theorem \ref{thA} into the following:
\begin{thm} 
\label{th3}
An elementary divisor of the homomorphism 
\begin{equation}
\label{eq8}
1-\tau_\ast \colon H_{m-1}(M_n;\Z) \to H_{m-1}(M_n;\Z)
\end{equation}
is $0, 1$ or $2$. 
Moreover, the numbers of the elementary divisors are given by the following table.
\begin{table}[h]
\caption{The numbers of the elementary divisors of $1-\tau_\ast$}
\begin{center}
\begin{tabular}{|c|c|c|c|}\hline
elementary divisor & $0$ & $1$ & $2$\\ \hline number & $D_n$ & $\alpha_n$ & $\beta_n$\\ \hline
\end{tabular}
\end{center}
\end{table}
\end{thm}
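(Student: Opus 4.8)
The plan is to study the integral endomorphism $Q:=1-\tau_\ast$ of the free abelian group $H:=H_{m-1}(M_n;\Z)$ of rank $2D_n$ and to extract the three numbers in the table from two invariants of $Q$: its rank over $\Q$ and its rank over $\mathbb F_2$. The starting observation is that $\tau^2=\mathrm{id}$ forces $\tau_\ast^2=1$, so
\[
Q^2=(1-\tau_\ast)^2=2-2\tau_\ast=2Q .
\]
First I would deduce from this relation alone that every elementary divisor of $Q$ lies in $\{0,1,2\}$. Writing a Smith normal form $Q=UDV$ with $U,V\in GL(2D_n,\Z)$ and $D=\mathrm{diag}(\delta_1,\dots,\delta_{2D_n})$, the identity $Q^2=2Q$ becomes $D(VU)D=2D$; comparing the $(i,i)$ entries with $W:=VU$ gives $\delta_i^2W_{ii}=2\delta_i$, so each nonzero $\delta_i$ divides $2$. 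This already proves the first assertion of Theorem \ref{th3} and reduces the table to counting how many $\delta_i$ equal $0$, $1$, and $2$.

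Next I would count the zeros. Their number is the corank $2D_n-\rank_\Q Q$, and over $\Q$ the involution $\tau_\ast$ is diagonalizable with $(\pm1)$-eigenspaces of dimensions $d_\pm$, so $\rank_\Q Q=d_-$ and the number of zeros equals $d_+$. Hence it suffices to show $\tr(\tau_\ast\vert H)=d_+-d_-=0$, which I would obtain from the Lefschetz fixed point theorem. The involution $\tau$ is fixed-point free: a fixed point is a real configuration $z_i\in\{\pm1\}$ with $\sum_{i=1}^{2m}z_i=1$, impossible by parity since a sum of $2m$ signs is even. Thus $L(\tau)=0$. To isolate the middle term I would compute the remaining traces: for $q\le m-2$ the inclusion \eqref{eq2} identifies $H_q(M_n)$ $\tau$-equivariantly with $\Lambda^q$ of the torus, on which conjugation acts by $(-1)^q$; for $q\ge m$ I would use Poincar\'e duality together with $\deg\tau=-1$ (conjugation preserves the orientation of $(S^1)^{2m}$ but reverses that of the normal $\C$-direction of $M_n$, viewed as the fibre over $1$ of $z\mapsto\sum_i z_i$) to relate $\tr(\tau_\ast\vert H_q)$ to $\tr(\tau_\ast\vert H_{2m-2-q})$. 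The low and high ranges then cancel in $L(\tau)$, leaving $(-1)^{m-1}\tr(\tau_\ast\vert H)=0$. Therefore $d_+=d_-=D_n$, so exactly $D_n$ elementary divisors vanish and exactly $D_n$ are nonzero.

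It remains to split the $D_n$ nonzero divisors into $1$'s and $2$'s. The number equal to $1$ is precisely the number of $\delta_i$ prime to $2$, i.e. $\rank_{\mathbb F_2}(Q\bmod 2)$ (reducing $Q=UDV$ modulo $2$ keeps $\bar U,\bar V$ invertible), and since the remaining nonzero divisors all equal $2$, the number of $2$'s is $D_n-\rank_{\mathbb F_2}(Q\bmod 2)$. Because $-1\equiv1\pmod 2$ we have $Q\equiv1+\tau_\ast\pmod 2$, so the entire table follows once one establishes the single identity
\[
\rank_{\mathbb F_2}\big((1+\tau_\ast)\bmod 2\big)=\alpha_n .
\]
This $\mathbb F_2$-rank computation is where I expect the real difficulty to lie, and it is exactly what Theorem \ref{th4} and Proposition \ref{pro6} are designed to supply: one fixes an explicit integral basis of $H_{m-1}(M_n;\Z)$ (the $D_n$ classes coming from the torus via \eqref{eq2} together with the $D_n$ ``new'' classes), writes down the matrix of $\tau_\ast$, reduces it mod $2$, and evaluates the rank. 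The hard part is organizing this matrix so that its $\mathbb F_2$-rank can be found in closed form; I would look for a recursion in $m$ whose solution is $\sum_{i=0}^{m-1}2^{m-1-i}\binom{2i}{i}$, consistent with the generating function $\frac{1}{(1-2x)\sqrt{1-4x}}$ recorded in the Remark. Finally, this counting is equivalent to Theorem \ref{thA} through Lemma \ref{lem8}: a block $\left(\begin{smallmatrix}0&1\\1&0\end{smallmatrix}\right)$ of $\tau_\ast$ contributes elementary divisors $\{1,0\}$ to $1-\tau_\ast$, while a block $\left(\begin{smallmatrix}1&0\\0&-1\end{smallmatrix}\right)$ contributes $\{2,0\}$, so $\alpha_n$ ones, $\beta_n$ twos and $D_n$ zeros correspond precisely to $\alpha_n$ hyperbolic and $\beta_n$ diagonal blocks.
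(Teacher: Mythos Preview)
Your first two reductions are correct and cleaner than the paper's route. The paper never isolates $Q^2=2Q$ nor invokes Lefschetz; it extracts all three counts at once from the Wang sequence of $M_n\to E_n\to S^1$ (with $E_n=M_n\times_\tau S^1$), reading the cokernel of $1-\tau_\ast$ off from $H_{m-1}(E_n;\Z)$. Your Smith--normal--form argument for $\delta_i\mid 2$ and your Lefschetz computation of $\tr(\tau_\ast\vert H_{m-1})=0$ (using $\deg\tau=-1$ and the isomorphism $i_\ast$ in low degrees) are genuine shortcuts that settle two of the three entries without ever introducing $E_n$.

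The gap is in your account of how Theorem~\ref{th4} and Proposition~\ref{pro6} supply the last number. They do \emph{not} produce an explicit matrix for $\tau_\ast$ on $H_{m-1}(M_n;\Z)$, and no recursion in $m$ appears. Proposition~\ref{pro6} computes the rank of $\cup R_n^2$ on $H^\ast(\BM_n;\Z_2)$ (via a combinatorial analysis of the relations \ref{R1}--\ref{R3}); this feeds the Gysin sequence of $S^1\to E_n\to\BM_n$ to give $PS_{\Z_2}(E_n)$, and together with Theorem~\ref{th4}(i),(ii) yields $H_\ast(E_n;\Z)$. The number $\beta_n$ then appears as the count of $\Z_2$-summands in $H_{m-1}(E_n;\Z)$, and the Wang sequence converts this into the number of elementary divisors equal to~$2$. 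So to finish your argument from the paper's inputs you would still run the Wang sequence; the direct matrix computation you sketch is not carried out anywhere, and the half of $H_{m-1}(M_n;\Z)$ not coming from the torus is precisely what the introduction calls ``mysterious and not well-understood''.
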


The proof of Theorem \ref{th3} consists of three steps. 
We set 
\[
 \BM_n=M_n/\tau.
\]
Moreover, let $\tau$ act on $S^1$ by antipodal and we set 
\[
 E_n= M_n \times_\tau S^1.
\]
The homology groups $H_\ast(\BM_n;\Z)$ are determined in \cite{K} and 
the cohomology ring $H^\ast (\BM_n;\Z_2)$ is determined in \cite{HK} 
(see also Theorem \ref{th5} below). 
\begin{enumerate}
 \item Using the Gysin sequence of the bundle
       \begin{equation}
	\label{eq9}
	 S^1 \to E_n \to \BM_n,
       \end{equation}
       we determine $H_\ast (E_n;\Z_2)$. 
       Here we recall that when $n$ is odd, $\tau$ is a fixed-point-free involution. 
       Hence \eqref{eq9} is an $S^1$-bundle. 
 \item With some more computations, we determine $H_\ast (E_n;\Z)$.
 \item Using the Wang sequence of the bundle
 \begin{equation}
  \label{eq10}
   M_n \to E_n \to S^1,
 \end{equation}
       we determine elementary divisors of the homomorphism \eqref{eq8}.
\end{enumerate}

The following theorem is a key to proving Theorem \ref{th3}.
\begin{thm}
\label{th4}
\begin{enumerate}
 \item Let $x \in H_\ast (E_n;\Z)$ be a torsion element. 
       Then we have $2x=0$. Therefore, for each $q$, the module $H_q(E_n;\Z)$
       has a form 
       \[
	\underset{a_q}{\oplus} \;\Z \oplus \underset{b_q}{\oplus}\; \Z_2
       \]
       for some $a_q, b_q \in \N \cup \{0\}$.
       \label{th4i}
 \item We set
       \[
	\phi_n=\sum_{\substack{q=0\\ \text{$q$ even}}}^{m-2} \binom{n-1}{q} t^q+ D_n t^{m-1}+
       \sum_{\substack{q=m\\ \text{$q$ odd}}}^{n-4} \binom{n-1}{q+2}t^q.
       \]
       Then we have
       \[
	PS_\Q(E_n)=(1+t) \phi_n.
       \]
       Note that this determines $a_q$ in \ref{th4i}.
\item We set
      \[
       \Gamma(E_n)= \sum_{q=0}^\infty b_q t^q,
      \]
      where $b_q$ is defined in \ref{th4i}. 
      Then we have
      \[
      \Gamma(E_n)=\sum_{\substack{q=0\\ \text{$q$ odd}}}^{m-2} \binom{n-1}{q}t^q 
      + \beta_n t^{m-1} 
      + \sum_{\substack{q=m\\ \text{$q$ even}}}^{n-3} \binom{n-1}{q+2}t^q,
      \]
      where $\beta_n$ is defined in Theorem \ref{thA}.
\end{enumerate}
\end{thm}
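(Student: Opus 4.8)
The plan is to compute $H_*(E_n;\Z)$ from the circle bundle \eqref{eq9}, using as inputs the integral homology of $\BM_n$ from \cite{K} and the mod $2$ cohomology ring of $\BM_n$ from \cite{HK} (Theorem \ref{th5}). The starting observation is that, since $n$ is odd, $\tau$ is free, so \eqref{eq9} is an \emph{oriented} $S^1$-bundle; concretely it is the circle bundle associated, through the antipodal inclusion $\Z_2=\{\pm1\}\hookrightarrow S^1$, to the double cover $M_n\to\BM_n$. Writing $w\in H^1(\BM_n;\Z_2)$ for the class of that cover, the integral Euler class of \eqref{eq9} is the Bockstein $e=\beta(w)\in H^2(\BM_n;\Z)$; hence $2e=0$ and the mod $2$ reduction is $\bar e = Sq^1 w = w^2$. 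All three parts will be read off from the Gysin sequence of \eqref{eq9} run with $\Q$-, $\Z_2$-, and $\Z$-coefficients.

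For part \ref{th4i} I would argue by a collapse of the integral Bockstein spectral sequence. Part 2 already pins down the total rational Betti number $\sum_q a_q$, while the mod $2$ Gysin sequence of \eqref{eq9}, in which the Euler class acts as cup product with $w^2$ on $H^*(\BM_n;\Z_2)$, computes $H^*(E_n;\Z_2)$ and the operation $Sq^1$ on it from the ring structure of \cite{HK}; this is exactly the content I would isolate as Proposition \ref{pro6}. One then checks that $\dim_{\Z_2}\bigl(\Ker Sq^1/\im Sq^1\bigr)$ equals $\sum_q a_q$, so that the Bockstein spectral sequence of $E_n$ satisfies $E_2=E_\infty$. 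Since a summand $\Z_{2^r}$ with $r\ge 2$ would support a nonzero differential $d_r$, the collapse forces every torsion element to have order $2$, giving the form $\oplus_{a_q}\Z\oplus_{b_q}\Z_2$.

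For part 2, tensoring \eqref{eq9} with $\Q$ kills $e$, so the rational Gysin sequence breaks into short exact sequences and yields $PS_\Q(E_n)=(1+t)\,PS_\Q(\BM_n)$. It remains to identify $PS_\Q(\BM_n)=\phi_n$. Because the cover is free, $H_*(\BM_n;\Q)=H_*(M_n;\Q)^{\tau}$, so in each degree I must extract the $+1$-eigenspace of $\tau_*$. Below the middle dimension the isomorphism $i_*$ of \eqref{eq2} (see \cite{FS,KT}) identifies $H_q(M_n;\Q)$ with $\wedge^q\Q^{\,n-1}$, on which complex conjugation acts by $(-1)^q$, giving invariants $\binom{n-1}{q}$ for even $q$ and $0$ for odd $q$. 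The crucial geometric input is that $\tau$ \emph{reverses} the orientation of $M_n$ (so $\BM_n$ is non-orientable); granting this, Poincar\'e duality transports the low-degree computation to the range $q\ge m$ with the parity reversed, producing the $\binom{n-1}{q+2}$ terms in odd degrees and killing the top class, while the self-dual middle degree is forced to contribute a traceless $\tau_*$ and hence exactly $D_n$ invariants. This matches $\phi_n$ term by term.

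Finally, for part 3, having established in part \ref{th4i} that all torsion is $2$-torsion, the universal coefficient theorem gives $\dim_{\Z_2}H_q(E_n;\Z_2)=a_q+b_q+b_{q-1}$. The left-hand sides are supplied by the mod $2$ Gysin computation of Proposition \ref{pro6}, and the $a_q$ by part 2; subtracting yields the first-order recursion $b_q+b_{q-1}=\dim_{\Z_2}H_q(E_n;\Z_2)-a_q$, which together with $b_{-1}=0$ determines every $b_q$ and hence $\Gamma(E_n)$. The main obstacle throughout is the mod $2$ bookkeeping: everything hinges on computing the rank of cup-with-$w^2$ on $H^*(\BM_n;\Z_2)$ precisely enough, in the delicate middle degree, to pin down $b_{m-1}=\beta_n$ and simultaneously to certify the Bockstein collapse of part \ref{th4i}. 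I expect this middle-degree rank computation, carried out from the ring of \cite{HK}, to be the heart of the argument.
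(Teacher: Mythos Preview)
Your strategy is correct and largely parallels the paper, but with one notable divergence in part~(i). The paper obtains the $2$-torsion statement by a one-line transfer argument: the double cover $\pi\colon M_n\times S^1\to E_n$ has torsion-free total space, and since $\pi_*\circ\tr^*=2$, any torsion class $x\in H_*(E_n;\Z)$ satisfies $\tr^*(x)=0$, hence $2x=0$. Your proposed route via a Bockstein spectral sequence collapse is valid in principle, but it is considerably heavier: it requires first carrying out the full mod~$2$ Gysin computation (which is really the content of part~(iii)) and then tracking $Sq^1$ through the Gysin sequence of \eqref{eq9}, which is delicate and not quite contained in Proposition~\ref{pro6} alone. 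The transfer argument makes (i) logically independent of (iii) and avoids this circularity.

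For part~(ii), the paper simply cites \cite{K} for $PS_\Q(\BM_n)=\phi_n$ and reaches the same conclusion via $H_*(E_n;\Q)\cong H_*(\BM_n;\Q)\otimes H_*(S^1;\Q)$, which is your rational Gysin argument in different clothing (the Euler class is $2$-torsion, so rationally trivial). Your re-derivation of $\phi_n$ from the eigenspace analysis is essentially how \cite{K} proceeds; the ``traceless in the middle'' claim is indeed forced, since orientation reversal gives $\tr(\tau_*\mid H_{m-1})=-\tr(\tau_*\mid H_{m-1})$ via Poincar\'e duality. For part~(iii) your recursion $b_q+b_{q-1}=\dim_{\Z_2}H_q(E_n;\Z_2)-a_q$ is exactly the paper's formula $\Gamma(E_n)=(PS_{\Z_2}(E_n)-PS_\Q(E_n))/(1+t)$, and you have correctly identified that the rank of $\cup R_n^2$ in the middle degree (Proposition~\ref{pro6}) is the crux.
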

We prove Theorem \ref{th4} in \S\ref{prof4}. 
We can prove (i) and (ii) of the theorem by standard arguments. 
But for (iii), we need to prove Proposition \ref{pro6} below.
To state the proposition, we recall the following:
\begin{thm}[{\cite[Cor. 9.2 and Prop. 9.3]{HK}}] 
 \label{th5}
 \begin{enumerate}
  \item The mod $2$ cohomology ring of $\BM_n$ is 
	\[
	H^\ast (\BM_n;\Z_2)=\Z_2[R_n,V_1,\dots,V_{n-1}]/\mathscr{I}_n,
	\]
	where $R_n$ and $V_1,\dots,V_{n-1}$ are of degree $1$ and 
	$\mathscr{I}_n$ is the ideal generated by the three families

	\begin{enumerate}[label={\upshape (R\arabic*)}]
	 \setlength{\itemsep}{4pt}
	 \item $\displaystyle V_i^2+R_nV_i\quad$ where $i=1,\dots,n-1$,\label{R1} 
	 \item $\displaystyle \prod_{i \in S} V_i\quad$ 
	       where $S \subset \{1,\dots,n-1\}$ is such that $m \leq |S|$,\label{R2}
	 \item $\displaystyle \sum_{S \subset L}R_n^{|L-S|-1} \prod_{i \in S} V_i\quad $ 
	       where $L \subset \{1,\dots,n-1\}$ is such that $m+1 \leq |L|$.\label{R3}
	\end{enumerate}

	The symbol $S$ in \ref{R3} runs over all subsets of $L$ including the empty set. 
	By \ref{R2} a term of the sum in \ref{R3} vanishes if $m \leq |S|$.
  \item Let $\xi \to \BM_n$ be the line bundle associated with the regular $2$-cover 
	$\pi\colon M_n \to \BM_n$. 
	Then we have $R_n=w_1(\xi)$.
 \end{enumerate}
\end{thm}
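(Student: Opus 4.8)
The plan is to compute $H^\ast(\BM_n;\Z_2)$ as a $\Z_2$-equivariant cohomology ring and to extract the presentation by comparing $M_n$ with the ambient torus $(S^1)^{n-1}$. First I note that for odd $n=2m+1$ the involution $\tau$ is free: a fixed configuration would have every $z_i\in\{1,-1\}$ with $\sum_{i=1}^{n-1}z_i=1$, which is impossible since a sum of $n-1=2m$ signs is even. Hence the Borel construction $M_n\times_\tau S^\infty$ is homotopy equivalent to $\BM_n$ and
\[
H^\ast(\BM_n;\Z_2)\cong H^\ast_{\Z_2}(M_n;\Z_2).
\]
The classifying map $\BM_n\to B\Z_2$ of the double cover $\pi$ pulls the generator of $H^1(B\Z_2;\Z_2)$ back to the class I call $R_n$, which is by construction $w_1(\xi)$; this already gives part (2).

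Next I would compute the ambient equivariant ring. For a single circle with the conjugation action $z\mapsto\bar z$, the fixed set $\{1,-1\}$ has the same total $\Z_2$-Betti number as $S^1$, so localization is injective and
\[
H^\ast_{\Z_2}(S^1;\Z_2)=\Z_2[R,V]/(V^2+RV),
\]
where the degree-one class $V$ restricts to $0$ and to $R$ at the two fixed points. Running the same computation on each factor (again equivariantly formal) gives
\[
H^\ast_{\Z_2}((S^1)^{n-1};\Z_2)=\Z_2[R,V_1,\dots,V_{n-1}]/(V_1^2+RV_1,\dots,V_{n-1}^2+RV_{n-1}).
\]
The inclusion \eqref{eq2} is $\tau$-equivariant, so restriction is a ring map into $H^\ast_{\Z_2}(M_n;\Z_2)$; I take $R_n$ and $V_1,\dots,V_{n-1}$ to be the images of $R$ and the $V_i$. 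This yields the generators together with relation \ref{R1} automatically.

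The heart of the argument is surjectivity of this restriction and the identification of its kernel. For surjectivity I would realize $M_n$ as the level set $\mu^{-1}(1)$ of $\mu(z)=\sum_i z_i$ and use the $\Z_2$-perfect Morse theory of polygon spaces (the bending functions) to conclude that $R_n$ and the $V_i$ generate $H^\ast_{\Z_2}(M_n;\Z_2)$. The kernel is then forced by the closing condition: since $V_i$ detects the locus $z_i\in\{\pm1\}$, a product over a subset $S$ of edges that is too long to be balanced by the remaining edges is supported on configurations incompatible with $\sum z_i=1$ and must vanish. Translating this long-subset obstruction into monomials produces exactly the vanishing products \ref{R2} and the weighted sums \ref{R3}.

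Finally I would show that \ref{R1}--\ref{R3} exhaust the kernel by a Poincar\'e-series count. Using \ref{R1} to remove squares and \ref{R2}, \ref{R3} to reduce long monomials, one reads off an additive $\Z_2$-basis of $\Z_2[R_n,V_1,\dots,V_{n-1}]/\mathscr{I}_n$, computes its Poincar\'e series, and matches it against the mod-$2$ Poincar\'e series of $\BM_n$ obtained from the integral homology of \cite{K}. Equality forces the surjection to be an isomorphism, completing the proof. The main obstacle is precisely this interface: establishing the Morse-theoretic surjectivity and then proving that \ref{R2} and \ref{R3} cut the kernel out completely, so that the monomial bookkeeping reproduces the Betti numbers of $\BM_n$ with nothing left over.
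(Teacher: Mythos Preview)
The paper does not prove Theorem~\ref{th5}; it is quoted verbatim from Hausmann--Knutson \cite[Cor.~9.2 and Prop.~9.3]{HK} and used as a black box. So there is no ``paper's own proof'' to compare against.

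That said, your outline is in the right spirit and is close to the strategy of \cite{HK}: identify $H^\ast(\BM_n;\Z_2)$ with $H^\ast_{\Z_2}(M_n;\Z_2)$ via freeness of $\tau$, compute the equivariant cohomology of the ambient torus (this is exactly how \cite{HK} produces the generators and relation \ref{R1}), and then cut down by the relations forced by the closing condition. Your identification of $R_n$ with $w_1(\xi)$ via the classifying map is correct and is how part~(ii) is obtained.

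Where your sketch is thin is precisely where the real work in \cite{HK} lies. First, the derivation of \ref{R2} and \ref{R3} is not just ``long subsets vanish'': in \cite{HK} the relations come from Poincar\'e duality in the \emph{abelian} polygon space (the quotient by the full $O(2)$, not just $SO(2)$), and \ref{R3} in particular encodes the dual classes of the short-subset monomials rather than a direct geometric vanishing; your heuristic about $V_i$ detecting $z_i\in\{\pm1\}$ does not by itself produce the alternating $R_n$-weighted sums. Second, your proposed dimension count would need the mod~$2$ Poincar\'e series of $\BM_n$ as independent input; citing \cite{K} for this is fine chronologically, but \cite{HK} instead establishes the presentation directly and the Poincar\'e series \eqref{eq11} is a \emph{consequence}. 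If you want a self-contained argument along your lines, you should either reproduce the duality computation of \cite{HK} or supply an independent derivation of the mod~$2$ Betti numbers of $\BM_n$.
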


For a field $K$ and a space $X$, let $PS_K(X)$ be the
Poincar\'e polynomial of $X$ with coefficients in $K$. 
As a corollary of Theorem \ref{th5} (i), we have
\begin{equation}
 \label{eq11}
  PS_{\Z_2}(\BM_n) = 
  \sum_{q=0}^{m-1}
  \left( \sum_{i=0}^q \binom{n-1}{i} \right)t^q 
  + \sum_{q=m}^{n-3}\left(\sum_{i=0}^{n-3-q}\binom{n-1}{i} \right) t^q.
\end{equation}

\begin{prop} 
\label{pro6}
We define
\[
 \lambda_q= 
 \rank\left[\cup R_n^2\colon H^q (\BM_n;\Z_2) \to H^{q+2}(\BM_n;\Z_2)\right]. 
\]
We set
\[
 \gamma _n=\sum_{i=0}^{m-2}\binom{n-1}{i}.
\]
Then the following result holds.
\[
 \lambda_q=
 \begin{cases}
  \dim H^q(\BM_n;\Z_2), &\quad 0 \leq q \leq m-3,\\
  \gamma_n-\beta_n, &\quad q=m-2,\\
  \dim H^{q+2}(\BM_n;\Z_2), &\quad m-1 \leq q \leq 2m-4.
 \end{cases}
\]
\end{prop}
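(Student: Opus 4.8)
The plan is to work entirely inside the explicit presentation of $H^\ast(\BM_n;\Z_2)$ from Theorem \ref{th5}. Writing $V_S=\prod_{i\in S}V_i$ for $S\subset\{1,\dots,n-1\}$, relation \ref{R1} reduces every monomial to the form $R_n^{\,a}V_S$, and \ref{R2} kills $V_S$ once $|S|\ge m$. Since the relation in \ref{R3} indexed by $L$ is homogeneous of degree $|L|-1\ge m$, there are no relations below degree $m$; hence for $q\le m-1$ the monomials $\{R_n^{\,q-s}V_S : |S|=s,\ 0\le s\le q\}$ form a $\Z_2$-basis of $H^q(\BM_n;\Z_2)$, their number $\sum_{s=0}^{q}\binom{n-1}{s}$ agreeing with \eqref{eq11}. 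Throughout, $\cup R_n^2$ acts by $R_n^{\,q-s}V_S\mapsto R_n^{\,q+2-s}V_S$.

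First I would dispose of the easy range. For $q\le m-3$ the target degree $q+2\le m-1$ still lies below the relations, so the images $\{R_n^{\,q+2-s}V_S:0\le s\le q\}$ are distinct basis vectors of $H^{q+2}$; thus $\cup R_n^2$ is injective and $\lambda_q=\dim H^q(\BM_n;\Z_2)$. Because $\BM_n$ is a closed manifold of dimension $2m-2$, $\Z_2$-Poincar\'e duality gives a nondegenerate pairing under which $\cup R_n^2\colon H^q\to H^{q+2}$ is adjoint to $\cup R_n^2\colon H^{2m-4-q}\to H^{2m-2-q}$, whence $\lambda_q=\lambda_{2m-4-q}$. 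Applying this to the case $0\le 2m-4-q\le m-3$ converts the injectivity statement into the surjectivity statement $\lambda_q=\dim H^{q+2}$ for $m-1\le q\le 2m-4$. So only the self-dual case $q=m-2$ remains.

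For $q=m-2$ I would compute the kernel. Let $W$ be the $\Z_2$-space on the reduced degree-$m$ monomials $R_n^{\,m-s}V_S$ ($0\le s\le m-1$), graded by the weight $s$, and let $\mathcal R\subset W$ be the span of the relations $r_L=\sum_{S\subset L,\,|S|\le m-1}R_n^{\,m-|S|}V_S$ over $L$ with $|L|=m+1$ (the terms with $|S|\ge m$ in \ref{R3} vanish by \ref{R2}). These span all degree-$m$ relations, and since there are $\binom{n-1}{m+1}=D_n=\dim W-\dim H^m$ of them by \eqref{eq11}, they are a basis of $\mathcal R$. Now $\cup R_n^2$ factors as the identification $H^{m-2}\cong W'$ of $H^{m-2}$ with the weight-$\le m-2$ part $W'\subset W$, followed by $\pi\colon W\twoheadrightarrow W/\mathcal R=H^m$; hence its kernel is $W'\cap\mathcal R=\Ker\!\left(p\colon\mathcal R\to W_{m-1}\right)$, where $p$ projects onto the top weight summand $W_{m-1}$. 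In the bases $\{r_L\}$ of $\mathcal R$ and $\{R_nV_T:|T|=m-1\}$ of $W_{m-1}$, the matrix of $p$ is exactly the inclusion matrix $N_{L,T}=[\,T\subset L\,]$ of $(m-1)$-subsets inside $(m+1)$-subsets of a $2m$-element set. Therefore $\dim\Ker(\cup R_n^2)=D_n-\rank_{\Z_2}N$, so the proposition at $q=m-2$ is equivalent to $\rank_{\Z_2}N=\alpha_n$, which yields $\lambda_{m-2}=\dim H^{m-2}-\beta_n=\gamma_n-\beta_n$.

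The main obstacle is this last rank computation, where the central binomial coefficients in $\alpha_n$ finally appear. I would evaluate $\rho_m:=\rank_{\Z_2}N$ either from the known diagonal (Smith) form of inclusion matrices, whose entry at level $i$ here is $\binom{m+1-i}{2}$, so that level $i$ contributes $\binom{2m}{i}-\binom{2m}{i-1}$ exactly when $m+1-i\equiv 2,3\pmod 4$; or, more self-containedly, by a restriction/contraction recursion over two removed ground-set points establishing $\rho_m=2\rho_{m-1}+\binom{2m-2}{m-1}$. Since $\alpha_{2m+1}=2\alpha_{2m-1}+\binom{2m-2}{m-1}$ and $\rho_2=4=\alpha_5$, induction then gives $\rho_m=\alpha_n$. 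The delicate bookkeeping is to keep track of the weight grading so that only the top weight $s=m-1$ survives in $W_{m-1}$; once the problem is reduced to the $\Z_2$-rank of $N$, everything else is the routine analysis above together with Poincar\'e duality.
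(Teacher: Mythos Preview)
Your argument is correct and arrives at the same reduction as the paper, but the route differs in two places.

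For the range $m-1\le q\le 2m-4$ you invoke $\Z_2$-Poincar\'e duality on the closed manifold $\BM_n$ to obtain $\lambda_q=\lambda_{2m-4-q}$, and then read off surjectivity from the already-established injectivity in the complementary range. The paper instead argues directly from \ref{R2}: every class in $H^{q+2}$ already contains a factor $R_n^{q-m+3}$ with $q-m+3\ge 2$, so $\cup R_n^2$ is surjective. Both are short; yours is more conceptual, the paper's more hands-on.

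For $q=m-2$ your setup coincides with the paper's: both identify the kernel with the null space of the ``two-step-down'' map $\partial_{m+1}\colon C_{m+1}\to C_{m-1}$ (your matrix $N$), so the problem becomes $\rank_{\Z_2}N=\alpha_n$. The paper computes this via Lemma~\ref{lem7}: it quotes the exactness of the $\Z_2$-chain complex $\cdots\to C_{m+1}\to C_{m-1}\to\cdots$ from \cite{BJS}, takes the Euler characteristic of the truncated exact sequence, and obtains $\dim\Ker\partial_{m+1}=D_n-d_n$ with $d_n=\sum_i(-1)^i\binom{2m}{m-1-2i}$; a short identity then gives $d_n=\alpha_n$. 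Your proposal to use Wilson's integral diagonal form of the inclusion matrix is a genuine alternative: the diagonal entries $\binom{m+1-i}{2}$ are odd precisely when $m+1-i\equiv 2,3\pmod 4$, and the telescoped sum of the corresponding multiplicities $\binom{2m}{i}-\binom{2m}{i-1}$ collapses to exactly the same alternating sum $d_n$. (Your recursion $\rho_m=2\rho_{m-1}+\binom{2m-2}{m-1}$ is plausible but you have not actually justified it; the diagonal-form route is the safer of your two options.) A further streamlining in your write-up is that you deduce the linear independence of the relations $r_L$ in $W$ directly from the dimension count \eqref{eq11}, whereas the paper proves this separately as Lemma~\ref{lem7}\,(ii) via \cite{WI}; your observation makes that lemma unnecessary for the present purpose.
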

The proposition implies that the homomorphism
\begin{equation}
\label{eq12}
\cup R_n^2\colon H^q (\BM_n;\Z_2) \to H^{q+2}(\BM_n;\Z_2)
\end{equation}
is injective for
$0 \leq q \leq m-3$ and surjective for $m-1 \leq q \leq 2m-4$. 
The only difficult case is that $\cup R_n^2$ jumps over $H^{m-1} (\BM_n;\Z_2)$. 
Note that $m-1= \frac{1}{2}\dim\BM_n$.

Additionally, we will reduce Proposition \ref{pro6} to Lemma \ref{lem7} in \S\ref{prof6}, 
which is a problem in linear algebra and combinatorics.

Before we leave this section, we give the following:
\begin{rem}
Note that \eqref{eq2} realizes $M_n$ as a codimension-two submanifold of $(S^1)^{n-1}$.
On the other hand, it is also possible to realize $M_n$ as a codimension-one submanifold of $(S^1)^{n-2}$. 
More precisely, there exists a compact $(n-2)$-dimensional submanifold $W_n$ of
$(S^1)^{n-2}$ with boundary such that $\partial W_n=M_n$. (See \cite{MT} for more details.)
Let $X^{(m-1)}$ be the $(m-1)$-skeleton of $X:=(S^1)^{n-1}$, where 
we give a cell structure of $X$ by the product complex of $S^1 =e^0 \cup e^1$.
Then it is proved in \cite{MT} that there is a homotopy equivalence $W_n \simeq X^{(m-1)}$. 
Consider the homology long exact sequence of the pair $(W_n,M_n)$: 
\begin{equation}
\label{eq13}
\begin{CD}
\to H_m(W_n;\Z) @>>> H_m(W_n,M_n;\Z) @>>> H_{m-1}(M_n;\Z) @>>> H_{m-1}(W_n;\Z) \to 0.\\
 \rotatebox{270}{$\cong$} @. \rotatebox{270}{$\cong$} @. @.\rotatebox{270}{$\cong$} \\
H_m(X^{(m-1)};\Z) @. H^{m-1}(W_n;\Z)@. @.\underset{D_n}{\oplus} \;\Z\\
\rotatebox{270}{$=$} @. \rotatebox{270}{$\cong$} @. @. @.\\
 $0$ @. \underset{D_n}{\oplus} \;\Z\ @. @. @. @.
\end{CD}
\end{equation}
Here we used Lefschetz duality.

The involution \eqref{eq4} naturally extends to an involution 
\begin{equation}
\label{eq14}
\tau: W_n \to W_n.
\end{equation}
In contrast to \eqref{eq4}, the involution \eqref{eq14} has fixed points. In fact, the fixed point set coincides
with the set of critical points of the restriction $f|W_n-M_n$, which is a Morse function, of a certain
function $f: W_n \to \R$. (See \cite{MT} for more details.)

Now we remark that \eqref{eq7} gives information on the direct summands of \eqref{eq13}. 
We also have a question: How does \eqref{eq7} reflect the information on the fixed point set of \eqref{eq14}? 
The author is grateful to the referee for suggesting these interesting comments.
\end{rem}

\section{Proof of Proposition \ref{pro6}}
\label{prof6}
(i) The case for $0 \leq q \leq m-3$. 
Among the relations in Theorem \ref{th5},  
only \ref{R1} is effective for $H^{q+2}(\BM_n;\Z_2)$. 
Hence \eqref{eq12} is injective.

(ii) The case for $m-1 \leq q \leq 2m-4$. 
\ref{R2} tells us that any element $x \in H^{q+2}(\BM_n;\Z_2)$ is of the form 
$x=R_n^{q-m+3}y$ for some $y \in \Z_2[R_n,V_1,\dots,V_{n-1}]$. 
Since $q-m+3 \geq 2$, \eqref{eq12} is surjective.

For the rest of this section, we consider the case $q=m-2$. 
For that purpose, we need to study the ideal $\mathscr{I}_n$ more carefully.
We number $(m+1)$-element subsets of $\{1,\cdots,n-1\}$ as $L_1,\cdots,L_{D_n}$, 
where we understand $D_n$ in \eqref{eq3} as $\binom{n-1}{m+1}$. 
For each $L_i$, we consider Theorem \ref{th5} \ref{R3} and write the relation as 
\begin{equation*}
p_{i}=f_{i}, 
\end{equation*}
where $p_{i}$ is the sum of monomials defined by $(m-1)$-element subsets of $L_i$ and 
$f_{i}$ is the sum of the monomials defined by $k$-element subsets of $L_i$ 
for all $k \leq m-2$. 
We set
\[
X_{m+1}=
\left\{\xi=(\xi_1,\cdots,\xi_{D_n})\in (\Z_2)^{D_n}\longsbar \sum_{i=1}^{D_n} \xi_i p_{i}=0 \;\; \text{in $\Z_2[R_n,V_1,\dots,V_{n-1}]$}\right\}. 
\]
\begin{lem}
\label{lem7}
\begin{enumerate}
 \item We have $\dim X_{m+1}= \beta_n$.
 \item We fix a basis $\Omega_{m+1}$ of $X_{m+1}$. 
       For $\xi \in \Omega_{m+1}$, 
       we set
       \[
	F_{\xi}=\sum_{i=1}^{D_n} \xi_i f_{i}.
       \]
       Then the elements of the set
       $\left\{ F_{\xi} \sbar \xi \in \Omega_{m+1} \right\}$
       are linearly independent in $\Z_2[R_n,V_1,\dots,V_{n-1}]$.
\end{enumerate}
\end{lem}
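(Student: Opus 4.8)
The plan is to convert both parts of the lemma into assertions about a single $0/1$ incidence matrix over $\Z$, reduced mod $2$, and then to treat (i) and (ii) by separate arguments. First I would make the defining equation of $X_{m+1}$ explicit. For the relation attached to $L_i$ the exponent of $R_n$ on a subset $S\subset L_i$ is $|L_i|-|S|-1=m-|S|$, so
\[
p_i=R_n\!\!\sum_{\substack{T\subset L_i\\ |T|=m-1}}\;\prod_{j\in T}V_j,
\qquad
f_i=\sum_{k=0}^{m-2}R_n^{\,m-k}\!\!\sum_{\substack{S\subset L_i\\ |S|=k}}\;\prod_{j\in S}V_j .
\]
Since the square-free monomials $R_n\prod_{j\in T}V_j$ indexed by the distinct $(m-1)$-subsets $T$ are linearly independent in $\Z_2[R_n,V_1,\dots,V_{n-1}]$, the condition $\sum_i\xi_ip_i=0$ is equivalent to $\sum_{i:\,T\subset L_i}\xi_i=0$ for every $(m-1)$-subset $T$. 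Hence $X_{m+1}=\Ker_{\Z_2}W$, where $W$ is the inclusion matrix of the $(m-1)$-subsets into the $(m+1)$-subsets of $\{1,\dots,n-1\}$. As $W$ has $\binom{n-1}{m+1}=D_n$ columns, part (i) becomes exactly the statement $\rank_{\Z_2}W=\alpha_n$.

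For (i) I would compute the $2$-rank of $W$. The ground set has $n-1=2m$ elements and $(m-1)+(m+1)=2m$, so $W$ is the self-complementary middle inclusion matrix, which has full rank over $\Q$; the rank drop is a genuinely characteristic-$2$ phenomenon. I would establish the rank by a recursion in $m$: peeling two elements off $\{1,\dots,n-1\}$ and sorting subsets by their intersection with this pair gives a block decomposition relating $\rank_{\Z_2}W$ for $n$ and for $n-2$, which I expect to yield $\rho_{n+2}=2\rho_n+\binom{2m}{m}$ with base case $\rho_5=\alpha_5$. Solving this recursion reproduces $\alpha_n=\sum_{i=0}^{m-1}2^{m-1-i}\binom{2i}{i}$ and matches the generating function recorded in the Remark. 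An alternative, cleaner route is to invoke Wilson's diagonal form for inclusion matrices: its diagonal entries here are $\binom{m+1-i}{2}$ with multiplicities $\binom{2m}{i}-\binom{2m}{i-1}$ for $0\le i\le m-1$, so $\rank_{\Z_2}W$ is the sum of those multiplicities for which $\binom{m+1-i}{2}$ is odd, and it remains to verify that this equals $\alpha_n$. I expect this rank computation to be the main obstacle.

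For (ii) I would argue by linear independence of the relations in the middle degree, using the known mod-$2$ Betti numbers. The full relation $r_i$ coming from \ref{R3} for $L_i$ is a sum of square-free monomial terms of $V$-degrees $0,\dots,m$; the $V$-degree-$m$ terms are annihilated by \ref{R2}, leaving $r_i\equiv p_i+f_i$ modulo the ideal generated by \ref{R1} and \ref{R2}. Because $F_\xi=\sum_i\xi_if_i$ has $V$-degree $\le m-2$ while $\sum_i\xi_ip_i$ has $V$-degree $m-1$, for $\xi\in X_{m+1}$ we have $\sum_i\xi_ip_i=0$ and therefore $\sum_i\xi_i r_i\equiv F_\xi$. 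Thus $F_\xi=0$ would force $\sum_i\xi_i r_i=0$ among the degree-$m$ relations. But the $D_n$ relations $r_i$ with $|L_i|=m+1$ are linearly independent in degree $m$: by \eqref{eq11} the dimension of $H^m(\BM_n;\Z_2)$ is $\gamma_n=\sum_{k=0}^{m-2}\binom{n-1}{k}$, which is exactly $\binom{n-1}{m-1}=D_n$ less than the number $\sum_{k=0}^{m-1}\binom{n-1}{k}$ of degree-$m$ monomials surviving \ref{R1} and \ref{R2}, and these relations are the only ones acting in that degree. Hence $\xi=0$, the map $\xi\mapsto F_\xi$ is injective on $X_{m+1}$, and the elements $\{F_\xi:\xi\in\Omega_{m+1}\}$ are linearly independent, which is (ii).
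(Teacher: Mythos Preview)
Your outline is sound and both parts would go through, but the routes differ from the paper's. For (i), you and the paper both identify $X_{m+1}$ with the $\Z_2$-kernel of the inclusion map $W=\partial_{m+1}\colon C_{m+1}\to C_{m-1}$ (where $C_k$ is the $\Z_2$-space on the $k$-subsets of $\{1,\dots,n-1\}$); from there the paper quotes \cite{BJS} for the exactness of the two-step complex $\cdots\to C_{m+1}\to C_{m-1}\to\cdots\to C_\varepsilon\to 0$ over $\Z_2$ and reads off $\dim\Ker\partial_{m+1}$ as an alternating sum of binomial coefficients, which is then checked to equal $\alpha_n$. Your two alternatives---Wilson's Smith normal form \cite{WI}, or a two-element peel-off recursion---are both legitimate, but as you acknowledge each leaves a combinatorial identity or a block-matrix computation to be filled in; the paper's exactness argument is shorter here. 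For (ii) the roles amusingly reverse: the paper shows that the matrix $Q$ encoding $(p_1+f_1,\dots,p_{D_n}+f_{D_n})$ has full column rank $D_n$ by invoking Wilson's rank result \cite{WI}, whereas you obtain the same linear independence from the dimension count in \eqref{eq11}, which is already available from Theorem~\ref{th5}. Your argument for (ii) is self-contained and arguably cleaner; the paper's is purely combinatorial and does not lean on the cohomology computation.
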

\begin{proof}
(i) Consider the $\Z_2$-vector space $C_k$ with $k$-element subsets of $\{1,\cdots,n-1\}$ 
 as a basis.
Let $\partial_k\colon C_k \to C_{k-2}$ 
be the linear map defined on the basis by the formula 
 \[
  \partial_k (\Delta):=\Gamma_1+\dots+\Gamma_{\binom{k}{k-2}},
 \]
 where $\Gamma_i$ are the $(k-2)$-element subsets of $\Delta$. 
 We regard $\langle L_1,\dots,L_{D_n}\rangle$ as a basis of $C_{m+1}$. 
 Forgetting $R_n$, we regard $p_{i}$ as an element of $C_{m-1}$. 
 From the definition of $p_{i}$, we have $\partial_{m+1} (L_i)=p_{i}$. 
 Thus there is an isomorphism $X_{m+1}\cong\Ker \partial_{m+1}$.

Consider the chain complex
\begin{equation}
 \label{eq15}
 \begin{CD}
  \cdots @>\partial_{m+3}>> C_{m+1} @>\partial_{m+1}>> 
  C_{m-1} @>\partial_{m-1}>> \cdots @>\partial_{\varepsilon+2}>>
  C_\varepsilon @>>>0,
 \end{CD}
\end{equation}
where $\varepsilon$ is $0$ or $1$ according as $m$ is odd or even. 
 Let $H_q$ be the $q$th homology group of \eqref{eq15}.
 Then \cite[Theorem 3.2]{BJS} tells us that 
 $H_{\varepsilon+2i}=0$ for all $i \in \N \cup\{0\}$. 
 In fact, the {\it middle term} in the sense of \cite[p.557]{BJS} is 
 $C_m$ in the other chain complex
 \[
 \begin{CD}
  \cdots @>\partial_{m+2}>> C_{m} @>\partial_{m}>> 
  C_{m-2} @>\partial_{m-2}>> \cdots @>\partial_{\delta+2}>>
  C_\delta @>>>0
 \end{CD} 
 \]
 and this is the only term which gives $H_q\not=0$.

We construct the following long exact sequence from \eqref{eq15}:
\begin{equation*}
 \begin{CD}
 0 @>>> \Ker \partial_{m+1}@>>> C_{m+1} @>\partial_{m+1}>> 
  C_{m-1} @>\partial_{m-1}>> \cdots @>\partial_{\varepsilon+2}>>
  C_\varepsilon @>>>0.
 \end{CD}
\end{equation*}
Computing the Euler characteristic of this, we have 
 \[
  \dim\Ker \partial_{m+1}=D_n-d_{n},
 \]
 where we set
 \[
d_{n}=
\sum_{i=0}^{\lfloor (m-1)/2 \rfloor} (-1)^i \binom{2m}{m-1-2i}
 \]
 for $n=2m+1$.
 It is easy to see (see also \cite[A082590]{OEIS}) that
 \[
  \sum_{k=0}^{m}\binom{2m}{k} \sin\frac{m-k}{2}\pi =\sum_{k=0}^{m-1}2^{m-1-k}\binom{2k}{k}.
 \]
 We have from this that $d_{n}=\alpha_n$ for all odd numbers $n$. 
 Thus we complete the proof of Lemma \ref{lem7} (i).

(ii) Writing $\xi \in \Omega_{m+1}$ in the column vector, 
 we define a $D_n \times \beta_n$ matrix $P$. We have 
 \begin{equation}
  \label{eq16}
   \begin{pmatrix}
    p_1&\cdots &p_{D_n}
   \end{pmatrix}P= 
   \begin{pmatrix}
    0&\cdots& 0 
   \end{pmatrix}.
 \end{equation}
 Assume that
 \begin{equation}
  \label{eq17}
   \begin{pmatrix}
    f_1&\cdots &f_{D_n}
   \end{pmatrix}P 
   \begin{pmatrix}x_1 \\ \vdots \\ x_{\beta_n}
   \end{pmatrix}=0.
 \end{equation}
 We must show that $x_1= \cdots = x_{\beta_n}=0$.
 Using \eqref{eq16} and \eqref{eq17}, we can write
 \[
   \begin{pmatrix}
   p_1+f_1&\cdots &p_{D_n}+f_{D_n}
   \end{pmatrix}
   P 
 \begin{pmatrix}
 x_1 \\ \vdots \\ x_{\beta_n}
 \end{pmatrix}
 =0.
 \]

 As in (i), we forget $R_n$. 
 Then $p_i$ and $f_i$ are elements of $\underset{k=0}{\overset{m-1}{\oplus}}{C_k}$.
 Using the monomial basis of $\underset{k=0}{\overset{m-1}{\oplus}}{C_k}$, 
 we identify
  $\begin{pmatrix}p_1+f_1&\cdots &p_{D_n}+f_{D_n}\end{pmatrix}$ 
 with an $r \times D_n$ matrix $Q$, 
 where $r=\sum_{k=0}^{m-1}\binom{2m}{k}$
 and an element of $Q$ is in $\Z_2$. 
 Thus we obtain 
 \begin{equation}
  \label{eq18}
   QP 
   \begin{pmatrix}
    x_1 \\ \vdots \\ x_{\beta_n}
   \end{pmatrix}
   =0.
 \end{equation}

 The following result is proved in \cite[Lemma 1]{WI}: 
 For a fixed $v \in \N$, we set $X=\{1,\cdots,v\}$. 
 We also fix $s$ and $t \in \N$.
 For each $s$-element subset $\Delta$ of $X$, 
 we consider the
 sum of $k$-element subsets of $\Delta$, where $k$ runs over $0\leq k \leq t$. 
In the same way as in $Q$, we construct a 
 $\left(\sum_{k=0}^t \binom{v}{k}\right)\times \binom{v}{s}$ matrix $U$, 
 whose elements are in $\Z_2$. 
 Assume that the condition $t \leq s \leq v-t$ is satisfied. 
 Then we have $\rank U=\binom{v}{t}$. 
 As remarked in \cite[line 5]{WI}, 
 the result certainly holds for any field, in particular, for $\Z_2$. 

 Now setting $v=n-1, s=m+1$ and $t=m-1$, 
 we have $\rank Q=\binom{2m}{m-1}=D_n$. 
 Therefore, the columns of $Q$ are linearly independent. 
 From \eqref{eq18}, we have
 \[
   P \begin{pmatrix}x_1 \\ \vdots \\ x_{\beta_n}\end{pmatrix}=0.
 \]
 Since $\rank P=\beta_n$ by (i), 
 we have $x_1= \cdots = x_{\beta_n}=0$. 
 This completes the proof of Lemma \ref{lem7} (ii).
\end{proof}

\begin{proof}[Proof of Proposition \ref{pro6} for $q=m-2$]
 We define a linear subspace $Y_m$ of $\Z_2[R_n,V_1,\dots,V_{n-1}]$ by 
 \[
  Y_m=
 \{R_n^{m-k}V_{i_1} \cdots V_{i_k} \sbar 
 1 \leq i_1 < \cdots < i_k \leq n-1 
 \text{ and } 0 \leq k \leq m-2\}.
 \]
 Let $Z_m$ be the linear subspace of $\Z_2[R_n,V_1,\dots,V_{n-1}]$ 
 spanned by polynomials in  $\left\{ F_{\xi} \sbar \xi \in \Omega_{m+1} \right\}$, 
 where $F_{\xi}$ is defined in Lemma \ref{lem7} (ii). 

Then we have
 \[
  \im \left[\cup R_n^2\colon H^{m-2} (\BM_n;\Z_2) \to H^{m}(\BM_n;\Z_2)\right]
 \cong Y_{m}/Z_m.
 \]
 Note that $\dim Y_m =\gamma_n$. 
 On the other hand, Lemma \ref{lem7} tells us that $\dim Z_m=\beta_n$.
 Hence $\lambda_{m-2}=\gamma_n-\beta_n$.
\end{proof}

\section{Proof of Theorem \ref{th4}}
\label{prof4}
 (i) We claim the following assertion: 
 Let $\pi\colon X \to \bar{X}$ be a regular $2$-cover 
 such that $H_\ast (X;\Z)$ are torsion free. 
 If $x \in H_\ast (\bar{X};\Z)$ is a torsion element, 
 then we have $2x=0$. 

 The assertion is well-known. 
 (See, for example, the arguments in \cite[9.3.2]{DIK}.) 
 In fact, if $\tr^\ast \colon H_\ast(\bar{X};\Z) \to H_\ast(X;\Z)$ is the transfer
homomorphism, then $\pi_\ast \circ \tr^\ast=2$. 

Since $H_\ast (X;\Z)$ are torsion free, 
we have $\tr^\ast (x)=0$. Hence $2x=0$.

Now if we apply the above assertion to the regular $2$-cover 
\[
 \pi\colon M_n \times S^1 \to E_n,
\]
then Theorem \ref{th4} (i) follows. 

(ii) We have
\begin{align}
 H_\ast (E_n;\Q) \cong &\, H_\ast (M_n\times S^1;\Q)^{\tau_\ast} \notag\\
 \cong & \, H_\ast(M_n;\Q)^{\tau_\ast} \otimes H_\ast(S^1;\Q) \notag\\
 \cong & \, H_\ast (\BM_n;\Q) \otimes H_\ast(S^1;\Q). \label{eq19}
\end{align}
It is known in \cite[Theorem C]{K} (see also \cite{HK}) that 
$PS_\Q (\BM_n)=\phi_n$. 
Hence \eqref{eq19} is as given in Theorem \ref{th4} (ii).

(iii) We claim that
\begin{multline}
 \label{eq20}
 PS_{\Z_2}(E_n)=\; \sum_{q=0}^{m-2}\binom{n}{q}t^q 
 +\left(\binom{n}{m-1}+\beta_n\right)\left(t^{m-1}+t^m\right)\\
  +\sum_{q=m+1}^{n-2}\binom{n}{q+2}t^q. 
\end{multline}

To prove this, we consider the Gysin sequence of the bundle \eqref{eq9}:
\[
\begin{CD}
 \cdots @>>> H^{q-2}(\BM_n;\Z_2) @>\cup R_n^{2} >> 
 H^{q}(\BM_n;\Z_2) @>>> H^{q}(E_n;\Z_2)\\
 {} @>>> H^{q-1}(\BM_n;\Z_2) @>\cup R_n^{2}>> \cdots
\end{CD}
\]
To check that the homomorphism is certainly $\cup R_n^2$, 
note that \eqref{eq9} is the sphere bundle associated 
to the vector bundle $M_n\times_\tau \R^2\cong \xi\oplus\xi$. 
Using Theorem \ref{th5} (ii), we have $w_2(\xi \oplus \xi)=w_1(\xi)^2=R_n^2$. 
Hence the homomorphism is $\cup R_n^2$. 

From the exactness, we have
\[
 \dim H^q (E_n;\Z_2)=
 \dim H^{q-1}(\BM_n;\Z_2)+\dim H^q(\BM_n;\Z_2)
 -\lambda_{q-2}-\lambda_{q-1}.
\]
Using \eqref{eq11} and Proposition \ref{pro6}, we obtain \eqref{eq20}.

Now the universal coefficient theorem tells us that
\[
 \Gamma(E_n)=\frac{PS_{\Z_2}(E_n)-PS_\Q(E_n)}{1+t}.
\]
Hence Theorem \ref{th4} (iii) follows.

\section{Proof of Theorem \ref{th3}}
\label{prof3}
Consider the Wang sequence of the bundle \eqref{eq9}
(see, for example, \cite[Lemma 8.4]{M}):
\[
 \begin{CD}
  \cdots @>>> H_{m-1} (M_n;\Z) @>1-\tau_\ast>> 
  H_{m-1} (M_n;\Z) @>>> H_{m-1} (E_n;\Z)\\
  {} @>>> H_{m-2}(M_n;\Z) @>1-\tau_\ast>> 
  H_{m-2} (M_n;\Z) @>>> \cdots
\end{CD}
\]

(i) The case that $m$ is even. 
We claim that the homomorphism 
\begin{equation}
 \label{eq21}
  1-\tau_\ast\colon H_{m-2}(M_n;\Z)   \to H_{m-2}(M_n;\Z)
\end{equation} 
is zero. 
In fact, the following commutative diagram tells us that it will suffice to prove the
assertion for $(S^1)^{n-1}$ and the result is clear.
\[
 \begin{CD}
  H_{m-2}(M_n;\Z) @>1-\tau_\ast>> H_{m-2}(M_n;\Z) \\
  @Vi_\ast V\cong V @V\cong V i_\ast V \\
  H_{m-2}((S^1)^{n-1};\Z) @>>1-\tau_\ast> H_{m-2}((S^1)^{n-1};\Z).
\end{CD}
\]

By \eqref{eq1}, we have
\[
 H_{m-1}(M_n;\Z)=\underset{2D_n}{\oplus}\Z.
\]
Since $m$ is even, Theorem \ref{th4} tells us that
\[
 H_{m-1}(E_n;\Z)=
 \underset{\binom{n-1}{m-2}+D_n}{\oplus} \Z \oplus \underset{\beta_n}{\oplus} \Z_2.
\]
From the exactness, an elementary divisor of the homomorphism \eqref{eq8} is $0, 1$ or $2$. 
The numbers of the elementary divisors $2$ and $0$ are $\beta_n$ and 
\[
 \binom{n-1}{m-2}+D_n-\binom{n-1}{m-2}=D_n,
\]
respectively. 
Hence the number of the elementary divisor $1$ is 
\[
 2D_n-\left(\beta_n+D_n\right)=\alpha_n.
\]

(ii) The case that $m$ is odd. 
The homomorphism \eqref{eq21} satisfies that 
$(1-\tau_\ast)(x)=2x$ for all $x \in H_{m-2}(M_n;\Z)$. 
Using this, we can determine the elementary divisors of \eqref{eq8} in the same way as in (i). 

\section{Proofs of Theorem \ref{thA} and Corollary \ref{coB}}
\label{profA}
\begin{lem}
\label{lem8}
For $x, y$ and $z \in \N \cup \{0\}$, 
we define a square matrix of size $2x+y+z$ by 
\[
 F(x,y,z) 
 = \underset{x}{\oplus} 
 \begin{pmatrix}0&1\\1 & 0 \end{pmatrix}
\oplus \underset{y}{\oplus} (1) {\oplus} \underset{z}{\oplus}(-1).
\]
Then the following results hold. 
\begin{enumerate}
 \item Let $P$ be an integral square matrix such that $P^2=I$. 
       Then there exist $x,y$ and $z \in \N \cup \{0\}$
       such that $P$ and $F(x,y,z)$ are integrally similar, 
       that is, there exists $S \in GL(n,\Z)$ such that
       \[
	S^{-1}PS=F(x,y,z).
       \]
\item If $(x,y,z) \not=(p,q,r)$, then $F(x,y,z)$ and $F(p,q,r)$ are not integrally similar.
\end{enumerate}
\end{lem}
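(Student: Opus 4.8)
The plan is to read the matrix $P$ as the action of a generator of $C_2$ on the lattice $L=\Z^n$ (so $g\mapsto P$), so that Lemma \ref{lem8} becomes the classification of integral involutions up to integral similarity. I would treat the uniqueness statement (ii) first, since it only requires exhibiting enough conjugation invariants to recover $(x,y,z)$, and then establish the existence statement (i) by an explicit analysis of how $L$ is ``glued'' from its eigenlattices.

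For (ii), set $L^+=\Ker(P-I)$ and $L^-=\Ker(P+I)$. These are saturated sublattices of $L$ and are intrinsic to the pair $(L,P)$, so the three numbers
\[
 p:=\rank_\Q L^+,\qquad q:=\rank_\Q L^-,\qquad \dim_{\Z_2}L/(L^+\oplus L^-)
\]
are invariant under conjugation by $GL(n,\Z)$. A block-by-block computation shows that for $F(x,y,z)$ they equal $x+y$, $x+z$ and $x$: the swap block $\bigl(\begin{smallmatrix}0&1\\1&0\end{smallmatrix}\bigr)$ contributes one $+1$-eigenline, one $-1$-eigenline, and makes $L^+\oplus L^-$ of index $2$, whereas $(1)$ and $(-1)$ contribute index $1$. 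These equations determine $(x,y,z)$ uniquely, so distinct triples yield matrices that are not integrally similar.

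For (i), observe that $L^+\oplus L^-$ has full rank in $L$ and that $2L\subset L^+\oplus L^-$, so the glue group $G:=L/(L^+\oplus L^-)$ is an elementary abelian $2$-group; sending a glue class of $w$ to $2w \bmod 2(L^+\oplus L^-)$ embeds $G$ into $(L^+/2L^+)\oplus(L^-/2L^-)$. The key step is to show that the two projections of $G$ onto $L^+/2L^+$ and onto $L^-/2L^-$ are both injective. Indeed, if a glue vector $w$ has $2w=a+b$ with $a\in 2L^+$ and $b\in L^-$, then $\tfrac12 b=w-\tfrac12 a\in L$, and since $P(\tfrac12 b)=-\tfrac12 b$ this forces $\tfrac12 b\in\Ker(P+I)=L^-$, so $b\in 2L^-$; hence the image is $0$. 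It follows that $G$ is the graph of an isomorphism between a subspace of $L^+/2L^+$ and a subspace of $L^-/2L^-$, each of dimension $x:=\dim_{\Z_2}G$.

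I would then choose a $\Z$-basis $f_1,\dots,f_p$ of $L^+$ and $g_1,\dots,g_q$ of $L^-$ whose reductions are adapted to this graph, so that $(\bar f_i,\bar g_i)\in G$ for $1\le i\le x$ and hence $w_i:=\tfrac12(f_i+g_i)\in L$ after adjusting by an element of $L^+\oplus L^-$ (lifting $\Z_2$-bases to $\Z$-bases is legitimate because $L^\pm\to L^\pm/2L^\pm$ is onto). Setting $w_i':=Pw_i=\tfrac12(f_i-g_i)$, the family
\[
 w_1,w_1',\dots,w_x,w_x',\,f_{x+1},\dots,f_p,\,g_{x+1},\dots,g_q
\]
generates $L$ (it recovers every $f_i=w_i+w_i'$ and $g_i=w_i-w_i'$, hence $L^+\oplus L^-$, together with the glue) and has cardinality $2x+(p-x)+(q-x)=p+q=n$, so it is a $\Z$-basis. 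In this basis $P$ swaps each pair $w_i\leftrightarrow w_i'$ and fixes the remaining $f$'s while negating the remaining $g$'s, i.e.\ it equals $F(x,p-x,q-x)$, proving (i). The main obstacle is precisely the injectivity of the two projections above: this is the only point where saturation of $L^\pm$ and integrality are essential, and it is what rules out ``skew'' gluings and forces the regular-representation blocks to split off diagonally; the remainder is bookkeeping.
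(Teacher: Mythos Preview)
Your argument is correct and takes a genuinely different route from the paper. For (i) the paper simply quotes a normal form from Laffey's lecture notes (namely that $P$ is integrally similar to $\bigl(\begin{smallmatrix}I_r & *\\ 0 & -I_s\end{smallmatrix}\bigr)$ with $*=\bigl(\begin{smallmatrix}I_k&0\\0&0\end{smallmatrix}\bigr)$) and then observes this is conjugate to $F(k,r-k,s-k)$; you instead give a self-contained lattice proof, analysing the glue group $G=L/(L^+\oplus L^-)$ and showing its two projections to $L^\pm/2L^\pm$ are injective, which lets you build an adapted $\Z$-basis directly. For (ii) the paper reads off the invariants from the elementary divisors of $I-P$ (namely $0,1,2$ with multiplicities $x+y,\,x,\,z$), whereas you use the triple $(\rank L^+,\rank L^-,\dim_{\Z_2}G)=(x+y,x+z,x)$; the two invariant systems are easily seen to carry the same information. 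Your approach is longer but has the advantage of being entirely self-contained---in effect you have reproved the cited result of Laffey---while the paper's is shorter only because it outsources the substantive step. One cosmetic point: the clause ``after adjusting by an element of $L^+\oplus L^-$'' is in fact unnecessary, since once $(\bar f_i,\bar g_i)$ lies in the image of $G$ the element $\tfrac12(f_i+g_i)$ is automatically in $L$.
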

\begin{proof} 
 (i) Using \cite[p.9, Example]{L} (see, in particular, $B_m$ in \cite[p.10]{L}), 
 $P$ is integrally similar to
 \[
 Q:=\begin{pmatrix} I_r & 
     \begin{pmatrix} I_k & 0\\0 & 0\end{pmatrix}\\ 
     0 & -I_s
    \end{pmatrix}
 \]
for some $k$ with $0 \leq k \leq \min(r,s)$. 
Since $Q$ is integrally similar to 
\[
 \underset{k}{\oplus} 
 \begin{pmatrix}0&1\\1 & 0 \end{pmatrix}
\oplus \underset{r-k}{\oplus} (1) {\oplus} \underset{s-k}{\oplus}(-1),
\]
(i) follows.

(ii) If $F(x,y,z)$ and $F(p,q,r)$ are integrally similar, 
 then so are $I-F(x,y,z)$ and $I-F(p,q,r)$. 
 An elementary divisor of $I-F(x,y,z)$ is $0,1$ or $2$ such that 
 their numbers are $x+y, x$ and $z$ respectively, 
 and a similar result holds for $I-F(p,q,r)$. 
 The assumption $(x,y,z) \not= (p,q,r)$ implies that $(x+y,x,z)\not=(p+q,p,r)$. 
 Hence (ii) follows.
\end{proof}

\begin{proof}[Proof of Theorem \ref{thA}] 
 By Lemma \ref{lem8} (i), 
 $A$ is integrally similar to $F(x,y,z)$ for some $x,y$ and $z \in \N \cup\{0\}$.
 Using Theorem \ref{th3}, 
 we compare the numbers of the elementary divisors $0, 1$ and $2$ of $I-A$ and $I-F(x,y,z)$.
 Consequently, we have $x=\alpha_n$ and $y=z=\beta_n$.
\end{proof}

\begin{proof}[Proof of Corollary \ref{coB}] 
 The right-hand side of \eqref{eq6} is $F(d,0,0)$ and 
 that of \eqref{eq7} is $F(\alpha_n,\beta_n, \beta_n)$.
 Since $\beta_n\not=0$ for $n \geq 7$, 
 Lemma \ref{lem8} (ii) tells us that 
 the right-hand sides of \eqref{eq6} and \eqref{eq7} are not integrally similar.
\end{proof}


\begin{thebibliography}{99}
\bibitem{BJS}S. Bell, P. Jones and J. Siemons,
{\it On modular homology in the Boolean algebra. II},
J. Algebra {\bf 199} (1998), 556--580.

\bibitem{DIK}A. Degtyarev, I. Itenberg and V. Kharlamov,
{\it Real Enriques Surfaces},
Lecture Notes in Mathematics {\bf 1746}, Springer-Verlag, Berlin, 2000.

\bibitem{FHS}M. Farber, J.-C. Hausmann and D. Sch\"utz,
{\it On the conjecture of Kevin Walker}, J. Topol. Anal. {\bf 1} (2009), 65--86.

\bibitem{FS}M. Farber and D. Sch\"utz,
{\it Homology of planar polygon spaces}, 
Geom. Dedicata {\bf 125} (2007), 75--92.

\bibitem{H}J.-C. Hausmann, 
{\it Sur la topologie des bras articul\'es}, Lecture Notes in Mathematics {\bf 1474}, Springer-Verlag, Berlin, 1989, pp. 146--159.

\bibitem{HK}J.-C. Hausmann and A. Knutson,
{\it The cohomology ring of polygon spaces},
Ann. Inst. Fourier (Grenoble) {\bf 48} (1998), 281--321.

\bibitem{K}Y. Kamiyama,
{\it Topology of equilateral polygon linkages in the Euclidean plane modulo
isometry group}, Osaka J. Math. {\bf 36} (1999), 731--745.

\bibitem{KT}Y. Kamiyama and M. Tezuka,
{\it Topology and geometry of equilateral polygon linkages in the Euclidean plane}, Quart. J. Math. {\bf 50} (1999), 463--470.

\bibitem{KM}M. Kapovich and J. Millson, {\it On the moduli space of polygons in the Euclidean plane}, 
J. Differential Geom. {\bf 42} (1995), 430--464. 

\bibitem{L} T.J. Laffey,
{\it Lectures on integer matrices}, unpublished lecture notes (1997). Available at
http://hermite.cii.fc.ul.pt/meetings/im\_1997/lectures.pdf. 

\bibitem{MT}R.J. Milgram and J.C. Trinkle, {\it The geometry of configuration spaces for closed chains in two
and three dimensions}, Homology Homotopy Appl. {\bf 6} (2004), 237-267.

\bibitem{M}J. Milnor,
{\it Singular points of complex hypersurfaces},
Ann. of Math. Stud., Vol. 61. Princeton Univ. Press, Princeton, NJ, 1968.

\bibitem{S}D. Sch\"utz,
{\it The isomorphism problem for planar polygon spaces}, J. Topol. {\bf 3} (2010), 713--742.

\bibitem{OEIS}
N. J. A. Sloane, {\it The On-Line Encyclopedia of Integer Sequences}. Available at 

http://oeis.org.

\bibitem{W}K. Walker, {\it Configuration spaces of linkages},
Undergraduate thesis, Princeton (1985).

\bibitem{WI}R. Wilson,
{\it A diagonal form for the incidence matrices of $t$-subsets vs. $k$-subsets},
European J. Combin. {\bf 11} (1990), 609--615.


\end{thebibliography}
\end{document}